\newtheorem{lema}{Lemma}[section]
\newtheorem{theorem}[lema]{Theorem}
\newtheorem*{Richter teo}{Theorem (Richter)}
\newtheorem{proposition}[lema]{Proposition}
\newtheorem{definicion}[lema]{Definition}
\hfill \fbox{}}
\newenvironment{demode}
  {\noindent {{\it Proof of }}}%
  {\par \hfill \fbox{}}
\def\CC{{\mathbb C}}
\def\D{{\mathcal D}}
\def\DD{{\mathbb D}}
\def\HH{{\mathcal H}}
\def\A{{\mathcal A}}
\def\LL{{\mathcal L}}
\def\M{{\mathcal M}}
\def\RR{{\mathbb R}}
\def\TT{{\mathbb T}}
\def\Re{\mathop{\rm Re}\nolimits}
\def\re{\mathop{\rm Re}\nolimits}
\def\Ker{\mathop{\rm Ker}\nolimits}
\def\dim{\mathop{\rm dim}\nolimits}
\begin{document}

\title[{\sc $C_0$-semigroup of 2-isometries}]
{$C_0$-semigroups of 2-isometries  and Dirichlet spaces}
\author{Eva A. Gallardo-Guti\'errez}
\address{
Universidad Complutense de Madrid e ICMAT\newline
Departamento de An\'alisis Matem\'atico,\,\newline
Facultad de Ciencias Matem\'aticas,\newline
Plaza de Ciencias 3\newline
28040, Madrid (SPAIN)}
\email{eva.gallardo@mat.ucm.es}
\author{Jonathan R. Partington}
\address{School of Mathematics,\newline
 University of Leeds,\newline
Leeds LS2 9JT, U.K.} \email{J.R.Partington@leeds.ac.uk}

\thanks{The authors are partially supported by Plan Nacional I+D grant no.
MTM2013-42105-P and MTM2016-77710-P}

\subjclass{Primary 47B38}
\keywords{2-isometries, right-shift semigroups, Dirichlet space}
\date{June 2016, Revised February 2017}
%\dedicatory{}
%\commby{}

% ----------------------------------------------------------------------

\begin{abstract}
In the context of a theorem of Richter, we establish a similarity between $C_0$-semigroups of analytic 2-isometries  $\{T(t)\}_{t\geq0}$ acting on a Hilbert space $\HH$ and the multiplication operator semigroup
$\{M_{\phi_t}\}_{t\geq 0}$ induced by $\phi_t(s)=\exp (-st)$ for $s$ in the right-half plane $\mathbb{C}_+$ acting boundedly on weighted Dirichlet spaces  on $\mathbb{C}_+$. As a consequence, we derive a connection with the right shift semigroup $\{S_t\}_{t\geq 0}$
$$
S_tf(x)=\left \{ \begin{array}{ll} 0 & \mbox { if  }0\leq x\leq t, \\
f(x-t)& \mbox { if  } x>t, \end{array} \right .
$$
acting on a weighted Lebesgue space on the half line $\mathbb{R}_+$ and address some  applications regarding the study of the invariant subspaces of $C_0$-semigroups of analytic 2-isometries.
\end{abstract}

% ----------------------------------------------------------------------
%\begin{center}
\maketitle
%\end{center}
% ----------------------------------------------------------------------

\section{Introduction}

The concept of a 2-isometry was introduced by Agler in the early eighties (cf. \cite{Agler85});
this is related to
 notions   due to J. W. Helton (see \cite{He1} and \cite{He2}) and characterized in terms of  their extension properties (see \cite{Agler}).
Recall that a bounded linear operator $T$ on a separable, infinite dimensional complex Hilbert space  $\HH$ is called a
{\em 2-isometry\/} if it satisfies $$T^{*2} T^2-2T^* T+I=0,$$ where $I$ denotes the identity operator.
In addition, such  operators are called \emph{analytic} if no nonzero vector is in the range of every power of $T$.
It turns out that $M_z$, i.e. the multiplication operator by $z$, acting on the classical Dirichlet space, is a cyclic analytic 2-isometry.
But, moreover, in \cite{Ri2} (see also \cite{Ri1}) Richter proved that any cyclic analytic 2-isometry is unitarily equivalent to $M_z$ acting on a generalized Dirichlet space $D(\mu)$.

More precisely, let $\mu$ be a finite non-negative Borel measure on the unit circle $\TT$ and $D(\mu)$  the \emph{generalized Dirichlet space} associated to $\mu$, that is, the Hilbert space consisting  of analytic  functions on the unit disc $\mathbb{D}$ such that the integral
$$
\int_{\mathbb{D}} |f'(z)|^2 \left ( \int_{|\xi|=1} \frac{1-|z|^2}{|\xi-z|^2}\, d\mu(\xi)\;\right )  \frac{dm(z)}{\pi}
$$
is finite (here $dm(z)$ denotes the Lebesgue area measure in $\mathbb{D}$). Note that if $\mu=0$, the space $D(\mu)$ is defined to be the classical Hardy space $H^2$ and for non-zero, finite, non-negative Borel measures $\mu$ on $\TT$, the space $D(\mu)$ is contained in the Hardy space (see \cite[Chapter 7]{FKMR}). Then Richter's Theorem reads as follows:

\begin{Richter teo}
Let $T$ be a bounded linear operator on an infinite dimensional complex Hilbert space  $\HH$. Then the following condition are equivalent:
\begin{enumerate}
\item[$(i)$] $T$ is an analytic 2-isometry with $\dim \Ker T^*=1$,
\item[$(ii)$] $T$ is unitarily equivalent to $(M_z, D(\mu))$ for some finite non-negative Borel measure on $\TT$, where
$$
\|f\|^2_{D(\mu)}= \|f\|^2_{H^2} + \int_{\mathbb{D}} |f'(z)|^2 \left ( \int_{|\xi|=1} \frac{1-|z|^2}{|\xi-z|^2}\, d\mu(\xi)\;\right )  \frac{dm(z)}{\pi}.
$$
\end{enumerate}
\end{Richter teo}

One of the main applications of Richter's Theorem  concerns the study of the invariant subspaces for the multiplication operator $M_z$ in the spaces $D(\mu)$ and its relationship with the classical Beurling Theorem for the Hardy space $H^2$ (see \cite{Beu}).
For instance, regarding the Dirichlet space $D=D\left (\frac{|d\xi|}{2\pi}\right )$, Richter and Sundberg \cite{RiSu} proved that any closed, invariant subspace $\mathcal{M}$ under $M_z$ satisfies that $\dim \mathcal{M}\ominus z\mathcal{M}=1$.
Moreover, if $\varphi \in \mathcal{M}\ominus z\mathcal{M}$ with $\|\varphi\|_D=1$, then $|\varphi(z)|\leq 1$ for $|z|\leq 1$ and $\mathcal{M}= \varphi D(m_{\varphi})$, where $dm_{\varphi}$ is the measure on $\TT$ given by $dm_{\varphi}(\xi)= |\varphi(\xi)|^2 \frac{|d\xi|}{2\pi}$.
For general $D(\mu)$ spaces, an analogous result holds.
We refer the reader to Chapters 7 and 8 in the recent monograph \emph{``A primer on the Dirichlet space''} \cite{FKMR} for more on the subject.

Motivated by the Beurling-Lax Theorem and the work carried out by Richter, the aim of this work is taking further the study of the 2-isometries and considering $C_0$-semigroups of 2-isometric operators. In particular, we will establish a similarity between $C_0$-semigroups of analytic 2-isometries  $\{T(t)\}_{t\geq0}$ acting on a Hilbert space $\HH$ and the multiplication operator semigroup  $\{M_{\phi_t}\}_{t\geq 0}$ induced by $\phi_t(s)=\exp (-st)$ for $s$ in the right-half plane $\mathbb{C}_+$ acting boundedly on weighted Dirichlet spaces $\widetilde{\D}_{\CC_+}(\nu)$ on $\mathbb{C}_+$ (see Definition \ref{def_Dirichlet_halfplane}). As a consequence, by means of the Laplace transform, we derive a connection with the right shift semigroup $\{S_t\}_{t\geq 0}$
$$
S_tf(x)=\left \{ \begin{array}{ll} 0 & \mbox { if  }0\leq t\leq x, \\
f(x-t)& \mbox { if  } x>t, \end{array} \right .
$$
acting on a weighted Lebesgue space on the half line $\mathbb{R}_+$.  Finally, some applications regarding the study of the invariant subspaces of $C_0$-semigroups of analytic 2-isometries are also discussed in Section \ref{Sec 3}.

\section{$C_0$-semigroups of analytic 2-isometries}

First, we introduce some basic concepts and terminology regarding $C_0$-semigroups of bounded linear operators. For more on this topic, we refer the reader to the
Engel--Nagel monograph \cite{EnNa}.

A $C_0$-semigroup  $\{T(t)\}_{t\geq0}$ of operators on a Hilbert space $\HH$ is a family of bounded linear operators on $\HH$ satisfying the functional equation
\begin{equation*}
\begin{cases}
\begin{array}{lll}
T(t+s)=T(t)T(s)		&\hbox{for all }t,s\geq0,\\
T(0)=I,
\end{array}
\end{cases}
\end{equation*}
and such that $T(t)\to I$ in the strong operator topology as $t\to0^+$. Given a $C_0$-semigroup $\{T(t)\}_{t\geq0}$, there exists a closed and densely defined linear operator $A$ that determines the semigroup uniquely, called the generator of $\{T(t)\}_{t\geq0}$, defined by means of
\begin{equation*}
Ax:=\lim_{t\to0^+}\frac{T(t)x-x}{t},
\end{equation*}
where the domain $D(A)$ of $A$ consists of all $x\in\HH$ for which this limit exists (see \cite[Chapter II]{EnNa}, for instance). Although the generator is, in general, an unbounded operator, it plays an important role in the study of a $C_0$-semigroup, reflecting many of its properties.

However, if 1 is in the resolvent of $A$, that is, in the set
$$\rho(A)=\{\lambda \in \mathbb{C}:\; (A-\lambda I): D(A)\subset \HH\to \HH \mbox{ is bijective}\},$$
then $(A-I)^{-1}$ is a bounded operator on $\HH$ by the Closed Graph Theorem, and the
Cayley transform of $A$ defined by
\begin{equation*}\label{eq:cogenerator}
V:=(A+I)(A-I)^{-1}
\end{equation*}
is a bounded operator on $\HH$, since $V-I=2(A-I)^{-1}$. Therefore $V$ determines the semigroup uniquely, since $A$ does. This operator is called the \emph{cogenerator} of the $C_0$-semigroup $\{T(t)\}_{t\geq0}$. Observe that 1 is not an eigenvalue of $V$.

Recall that if $A$ is a closed operator,  then the \emph{spectral bound} $s(A)$ of $A$ is defined by
$$
s(A):=\sup \{\Re \lambda:\; \lambda \in \sigma(A)\},
$$
where $\sigma(A)= \mathbb{C}\setminus \rho(A)$ is the spectrum of $A$, and in case that $A$ is the generator of a $C_0$-semigroup, then $s(A)$ is always dominated by the \emph{growth bound of the semigroup}, that is,
$$
-\infty\leq s(A)\leq w_0=\inf \left \{w\in \mathbb{R}:\; \begin{array}{ll}
\mbox{ there exists } M_w\geq 1 \mbox{ such that }\\
\|T(t)\|\leq M_w \; e^{wt} \mbox{ for all } t\geq 0
\end{array}\right \}.
$$
Indeed, if $r(T(t))$ denotes the spectral radius of $T(t)$, it follows that $w_0=\frac{1}{t} \log r(T(t))$ for each $t>0$ (see \cite[Section 2, Chapter IV]{EnNa}, for instance). The following lemma will be useful in the context of our main result later.

\begin{lema}\label{lemma:2-isometric generator}
Let $\{T(t)\}_{t\geq 0}$ be a $C_0$-semigroup on a separable, infinite dimensional complex Hilbert space $\HH$ consisting of 2-isometries and $A$ its generator. Then $1  \in \rho(A)$ and therefore, the cogenerator $V$ of  $\{T(t)\}_{t\geq 0}$ is well-defined.
\end{lema}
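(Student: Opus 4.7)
The plan is to reduce the lemma to a spectral radius computation: I aim to show that the growth bound $w_0$ of the semigroup equals zero, from which the inequality $s(A)\leq w_0$ recalled just before the lemma gives $\sigma(A)\subset\{\lambda:\Re\lambda\leq 0\}$; in particular, since $\Re 1=1>0$, this forces $1\in\rho(A)$, which is precisely what is needed to make $V=(A+I)(A-I)^{-1}$ well defined.

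Since $w_0=\tfrac{1}{t_0}\log r(T(t_0))$ for any fixed $t_0>0$, it will be enough to prove the following single-operator fact: every 2-isometry $T$ has spectral radius exactly one. To verify this I would apply the defining identity $T^{*2}T^2-2T^*T+I=0$ to the vector $T^k x$, producing the second-order recurrence
\[
\|T^{k+2}x\|^2-2\|T^{k+1}x\|^2+\|T^k x\|^2=0.
\]
The sequence $\{\|T^k x\|^2\}_{k\geq 0}$ is therefore arithmetic, so a one-line induction gives
\[
\|T^n x\|^2=\|x\|^2+n(\|Tx\|^2-\|x\|^2),\qquad n\geq 0.
\]
Nonnegativity of the right-hand side for every $n$ forces $\|Tx\|\geq\|x\|$, so $T$ is expansive and in particular $\|T^n\|\geq 1$ for all $n$; the same identity also yields the upper estimate $\|T^n\|^2\leq 1+n(\|T\|^2-1)$. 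Taking $n$-th roots gives $r(T)=\lim_n\|T^n\|^{1/n}=1$.

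Applied to $T(t_0)$, which is a 2-isometry by hypothesis, this yields $r(T(t_0))=1$ and hence $w_0=0$, completing the argument as outlined in the first paragraph. The proof is essentially computational and I foresee no substantial obstacle; the only ingredient worth underlining is the elementary identity for $\|T^n x\|^2$, which simultaneously produces expansiveness (the lower bound on $\|T^n\|$) and at most square-root growth of the iterates (the upper bound), and so pins the spectral radius of each $T(t)$—and hence the growth bound of the semigroup—to the desired value.
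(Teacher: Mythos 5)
Your proposal is correct and follows essentially the same route as the paper: both derive the arithmetic identity $\|T^n x\|^2=n\|Tx\|^2-(n-1)\|x\|^2$ from the 2-isometry equation, conclude $\|T^n\|=O(\sqrt{n})$ and hence $r(T(t))\leq 1$, and then use $s(A)\leq w_0\leq 0$ to get $1\in\rho(A)$. The only (harmless) extra in your version is pinning $r(T(t))$ to exactly $1$ via expansiveness, which is not needed for the conclusion.
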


\begin{proof}
By induction it follows that, for any $n\geq 1$ and $t\geq 0$, $T(t)$ satisfies
\[
T(t)^{*n} T(t)^n-nT(t)^* T(t)+(n-1)I=0,
\]
and so
$$
\|T(t)^n x\|^2= n \|T(t) x\|^2- (n-1) \|x\|^2
$$
for $x\in \HH$. From here, it follows that $\|T(t)^n\|\leq C\; \sqrt{n}$, where $C$ is a constant independent of $n$, and therefore the spectral radius $r(T(t))\leq 1$ for any $t$. Therefore, $s(A)\leq 0$; and therefore $1\in \rho(A)$.
\end{proof}

The next result consists of a particular instance of \cite[Theorem 1]{JPPW}, where $C_0$-semigroups of hypercontractions are considered. We state it for $C_0$-semigroups of 2-isometries and include its proof for the sake of completeness.

\begin{proposition}\label{prop:2 isometry}
Let $\{T(t)\}_{t\geq 0}$ be a $C_0$-semigroup on a separable, infinite dimensional complex Hilbert space $\HH$. Then the following conditions are equivalent:
\begin{enumerate}[(i)]
\item $T(t)$ is a 2-isometry for every $t\geq 0$.
\item The mapping $t\in \mathbb{R}_+ \mapsto \|T(t)x\|^2$ is affine for each $x \in \HH$.
\item $\re \langle A^2y, y \rangle + \|Ay\|^2=0  \qquad (y \in \D(A^2))$.
\item The cogenerator $V$ of $\{T(t)\}_{t\geq 0}$ exists and is a 2-isometry.
\end{enumerate}
\end{proposition}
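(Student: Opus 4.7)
My plan is to prove the proposition along the cycle $(i)\Rightarrow(ii)\Rightarrow(iii)\Rightarrow(i)$ first, closing the loop among the first three conditions, and then handle $(i)\Leftrightarrow(iv)$ separately via the Cayley transform.

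For $(i)\Rightarrow(ii)$, I would reuse the induction already carried out in Lemma \ref{lemma:2-isometric generator}: if each $T(t)$ is a 2-isometry, then for every $n\geq 1$,
\[
\|T(t)^n x\|^2 = n\|T(t)x\|^2 - (n-1)\|x\|^2.
\]
Combining this with the semigroup identity $T(t)^n=T(nt)$ yields $\|T(nt)x\|^2 = n\|T(t)x\|^2-(n-1)\|x\|^2$ for every positive integer $n$, hence $f_x(qt)=q\,f_x(t)-(q-1)f_x(0)$ for every positive rational $q$, where $f_x(t):=\|T(t)x\|^2$. Strong continuity of the semigroup makes $f_x$ continuous, so affinity extends from rational multiples to all of $\mathbb{R}_+$. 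The converse $(ii)\Rightarrow(i)$ is immediate: the affine condition $f_x(2t)-2f_x(t)+f_x(0)=0$ is precisely the 2-isometry identity $\|T(t)^2x\|^2-2\|T(t)x\|^2+\|x\|^2=0$.

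For $(ii)\Leftrightarrow(iii)$, I would exploit that for $y\in D(A^2)$ the orbit $t\mapsto T(t)y$ is $C^2$ with $\frac{d}{dt}T(t)y=AT(t)y$, so a direct differentiation gives
\[
\frac{d^2}{dt^2}\|T(t)y\|^2 = 2\bigl(\re\langle A^2 T(t)y,T(t)y\rangle + \|AT(t)y\|^2\bigr).
\]
Affinity of $f_y$ is equivalent to this second derivative vanishing at $t=0$; since $T(s)y\in D(A^2)$ for all $s\geq 0$, replacing $y$ by $T(s)y$ shows the vanishing at every $t$, so the two statements coincide on $D(A^2)$. To pass from $D(A^2)$ to arbitrary $x\in\HH$ in $(ii)$, I use that $D(A^2)$ is dense, that $\{T(t)\}$ is uniformly bounded on compact subsets of $\mathbb{R}_+$, and that the class of affine continuous functions is closed under locally uniform limits.

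The remaining equivalence $(i)\Leftrightarrow(iv)$ I plan to reduce to $(iii)$ via a direct computation with the Cayley transform. Writing $V-I=2(A-I)^{-1}$ and $V+I=2A(A-I)^{-1}$, I get
\[
V^*V-I = 2(A^*-I)^{-1}(A^*+A)(A-I)^{-1},
\]
and, putting $y=(A-I)^{-1}x$, the identity $\|Vx\|^2-\|x\|^2 = 4\re\langle Ay,y\rangle$. The 2-isometry condition for $V$ is equivalent to $\|Vx\|^2-\|x\|^2$ being invariant under the substitution $x\mapsto Vx$, i.e. $V^{*}(V^*V-I)V=V^*V-I$, which after a parallel computation (taking $x=(A-I)^2 y$ with $y\in D(A^2)$) reduces to $2\re\langle A^2 y,y\rangle + 2\|Ay\|^2 = 0$, exactly condition $(iii)$. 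The principal obstacle is this last manipulation: one must keep careful track of domains, using that $\mathrm{Ran}((A-I)^{-2})=D(A^2)$ is dense in $\HH$ so that the pointwise identity on a dense set propagates to the operator identity $V^{*2}V^2-2V^*V+I=0$ by continuity of both sides in $x$.
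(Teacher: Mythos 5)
Your proposal is correct and follows essentially the same route as the paper: the second-derivative computation for $(ii)\Leftrightarrow(iii)$ and the substitution $x=(A-I)^2y$ in $\langle(I-2V^*V+V^{*2}V^2)x,x\rangle$ for the equivalence with $(iv)$ are exactly the paper's arguments. The only cosmetic difference is in $(i)\Rightarrow(ii)$, where the paper uses the midpoint identity $\|T(t+\tau)x\|^2=\tfrac12\bigl(\|T(t)x\|^2+\|T(t+2\tau)x\|^2\bigr)$ together with continuity, whereas you rescale by rationals via the iterated $2$-isometry identity; both are fine.
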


\begin{proof}
$(i) \iff (ii)$: If each $T(t)$ is a 2-isometry, then for $t \ge 0$ and $\tau>0$ we have
\[
\langle T(t+2\tau)x,T(t+2\tau)x \rangle - 2 \langle T(t+\tau)x,T(t+\tau)x \rangle +
\langle T(t)x,T(t)x \rangle = 0,
\]
so that
\begin{equation}\label{eq:convex}
\|T(t+\tau)x\|^2 = \frac12 (\|T(t)x\|^2 + \|T(t+2\tau)x\|^2).
\end{equation}
Since $t\in \mathbb{R}_+ \to \|T(t)x\|^2$ is continuous, the mapping is affine.

Conversely, taking $t=0$ we see that \eqref{eq:convex} implies that $T(\tau)$ is a 2-isometry.\\

$(ii) \iff (iii)$: For $t>0$ we calculate the second derivative of the function $g: t \mapsto \|T(t)y\|^2$ for
$y \in \D(A^2)$.
We have
\begin{eqnarray*}
g''(t)&=&\frac{d^2}{dt^2}\langle T(t)y, T(t)y\rangle \\
&=& \langle A^2 T(t)y,T(t)y \rangle
+ 2 \langle AT(t)y, AT(t)y \rangle + \langle T(t)y,A^2T(t)y \rangle.
\end{eqnarray*}
For $g$ affine, $g''$ is zero, and Condition (iii) follows on letting $t \to 0$. Conversely,
Condition (iii) implies Condition (ii) for $y \in \D(A^2)$, and hence for all $y$ by density.\\

$(iii) \iff (iv)$:
We calculate
\[
\langle ( I-2V^*V+V^{*2}V^2)x,x\rangle
\]
for $x=(A-I)^2y$ (note that $(A-I)^{-2}:H \to H$ is
defined everywhere and has dense range).
We obtain
\begin{eqnarray*}
&& \langle (A-I)^2y,(A-I)^2y \rangle
- 2\langle (A^2-I)y, (A^2-I)y \rangle
+ \langle (A+I)^2y,(A+I)^2y \rangle \\
&& \qquad =
4\langle A^2 y,y \rangle + 8 \langle Ay, Ay \rangle
+ 4 \langle y, A^2y \rangle  .
\end{eqnarray*}
Thus $V$ is a 2-isometry if and only if Condition (iii) holds.
\end{proof}

Before stating  the main result of the section, let us introduce the following definition.

\begin{definicion}\label{def_Dirichlet_halfplane}
Let $\nu$ be a finite positive Borel measure supported on the imaginary axis. The Dirichlet space  $\widetilde{\D}_{\CC_+}(\nu)$ is defined as the space of analytic functions $F$ on right half-plane $\CC_+$ such that
\[
\|F\|^2= |F(1)|^2+ \frac{1}{\pi} \int_{\CC_+} |F'(s)|^2 \left (x+ \frac{1}{\pi} \int_{-\infty}^{\infty} \frac{x}{x^2+(y-\tau)^2}\, d\nu(\tau) \right) \, dx\, dy<\infty
\]
where $s=x+iy$.
\end{definicion}

The spaces $\widetilde{\D}_{\CC_+}(\nu)$ arise, in a natural way, when we analyze $C_0$-semigroups of analytic 2-isometries in Hilbert spaces, as it is stated in our main result:

\begin{theorem}\label{thm:main}
Let $\{T(t)\}_{t\geq 0}$ be a $C_0$-semigroup on a separable, infinite dimensional complex Hilbert space $\HH$ consisting of analytic 2-isometries for every $t>0$ such that
\begin{equation}\label{equ1}
\dim \bigcap_{t>0} \ker \left (T^*(t)-e^{-t}\,I\right ) =1.
\end{equation}
Then there exists a finite positive Borel measure $\nu$ supported on the imaginary axis such that
$\{T(t)\}_{t\geq 0}$ is similar to the semigroup of multiplication operators induced by $\exp(-ts)$ acting on the space $\widetilde{\D}_{\CC_+}(\nu)$. Moreover, if the multiplication operators induced by $\exp(-ts)$ act continuously for every $t>0$ on a Dirichlet space $\widetilde{\D}_{\CC_+}(\tilde{\nu})$ where $\tilde{\nu}$ is a finite positive Borel measure supported on the imaginary axis, then the corresponding semigroup consists of analytic 2-isometries and satisfies (\ref{equ1}).
\end{theorem}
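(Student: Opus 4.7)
The strategy is to pass to the cogenerator of the semigroup, invoke Richter's theorem at the cogenerator level, and then transport the resulting model from the disk to the half-plane through a Cayley transform. By Lemma~\ref{lemma:2-isometric generator} the cogenerator $V=(A+I)(A-I)^{-1}$ of $\{T(t)\}_{t\ge 0}$ is a bounded operator, and by Proposition~\ref{prop:2 isometry}(iv) it is a 2-isometry. To apply Richter's theorem to $V$ one has to check the two remaining hypotheses. For $\dim\Ker V^*=1$: the identity $V^*=I+2(A^*-I)^{-1}$ yields $\Ker V^*=\Ker(A^*+I)$, and since $A^*y=-y$ is equivalent to $T(t)^*y=e^{-t}y$ for every $t>0$, the hypothesis (\ref{equ1}) gives exactly $\dim\Ker V^*=1$. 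Analyticity of $V$ is obtained from a Wold-type decomposition for 2-isometries: the unitary part of $V$ coincides with the common unitary part of the semigroup $\{T(t)\}_{t\ge 0}$, which is forced to be trivial by the analyticity of each $T(t)$. Richter's theorem then provides a finite non-negative Borel measure $\mu$ on $\TT$ and a unitary $U_0\colon\HH\to D(\mu)$ intertwining $V$ with $M_z$.

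Next, I would pass from $\DD$ to $\CC_+$ through the Cayley transform $\varphi(s)=(s-1)/(s+1)$, which maps $\CC_+$ conformally onto $\DD$ with $\varphi(1)=0$ and $\varphi(\infty)=1$. Define $\nu$ as the push-forward under $\varphi^{-1}$ of $\mu$ restricted to $\TT\setminus\{1\}$. The composition operator $U_1f=f\circ\varphi$, modified if necessary by a scalar weight to absorb the Jacobian, should be a topological isomorphism from $D(\mu)$ onto $\widetilde{\D}_{\CC_+}(\nu)$. The key identities $1-|\varphi(s)|^2=4x/|1+s|^2$ and $dm(z)=|\varphi'(s)|^2\,dm(s)$ transform the Poisson-kernel integral on $\DD$ into precisely the weight $x+\frac{1}{\pi}\int x/(x^2+(y-\tau)^2)\,d\nu(\tau)$ appearing in Definition~\ref{def_Dirichlet_halfplane}, the lone $x$-term (present even when $\nu=0$) recording the $H^2$-contribution on the disk side. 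Under the composite $U=U_1U_0$, the operator $M_z$ is intertwined with multiplication by $\varphi(s)=(s-1)/(s+1)$, which is precisely the cogenerator of $\{M_{\phi_t}\}_{t\ge 0}$ since its generator is $M_{-s}$. Because a $C_0$-semigroup is determined uniquely by its cogenerator, the similarity of cogenerators lifts to the claimed similarity of semigroups.

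For the converse, assume $\{M_{\phi_t}\}_{t\ge 0}$ acts boundedly on $\widetilde{\D}_{\CC_+}(\tilde\nu)$ for every $t>0$. The semigroup law is algebraic and strong continuity follows by a standard Banach--Steinhaus argument combined with the pointwise convergence $\phi_t\to 1$. Each $M_{\phi_t}$ is a 2-isometry by Proposition~\ref{prop:2 isometry}(ii): substituting $|\phi_t(s)|^2=e^{-2tx}$ in the explicit form of the norm and differentiating twice in $t$ produces an affine function, since the $e^{-2tx}$ factor pulls out of the spatial integral. Analyticity of each $M_{\phi_t}$ follows because $|\phi_t^n|\to 0$ uniformly on compacta of $\CC_+$ as $n\to\infty$, so $\bigcap_n\phi_t^n\,\widetilde{\D}_{\CC_+}(\tilde\nu)=\{0\}$. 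For the kernel condition, the reproducing kernel $k_1$ at $s=1$ satisfies $M_{\phi_t}^*k_1=e^{-t}k_1$, while any element of $\bigcap_{t>0}\Ker(M_{\phi_t}^*-e^{-t}I)$ is forced to be a scalar multiple of $k_1$ by varying $t$ and exploiting analyticity.

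The main obstacle I anticipate is the Cayley-transform identification in the second paragraph: matching the weighted Dirichlet norms exactly, identifying the correct measure $\nu$ on the imaginary axis, and handling a possible atom of $\mu$ at the fixed point $z=1$ of $\varphi$ (corresponding to $s=\infty$), so that $U_1$ is genuinely a topological isomorphism onto $\widetilde{\D}_{\CC_+}(\nu)$ with the precise norm of Definition~\ref{def_Dirichlet_halfplane}.
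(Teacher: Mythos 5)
Your first half tracks the paper's own proof almost exactly (cogenerator via Lemma~\ref{lemma:2-isometric generator} and Proposition~\ref{prop:2 isometry}, $\dim\Ker V^*=1$ from \eqref{equ1} applied to the adjoint semigroup, analyticity of $V$ by killing the unitary summand in the Wold decomposition, then Richter plus the Cayley transform), and that part is sound. Two details there need correction: the measure on $i\RR$ is not the bare push-forward of $\mu|_{\TT\setminus\{1\}}$ but carries the density $\pi(1+\tau^2)$ (the paper's identity $d\nu(\tau)=\pi(1+\tau^2)\,d\mu(\xi)$), and the lone $x$ in the weight of Definition~\ref{def_Dirichlet_halfplane} is \emph{not} the $H^2$-contribution: under $z=(s-1)/(s+1)$ one has $(1-|z|^2)/|1-z|^2=x$, so the term $x$ is the image of a unit atom of $\mu$ at $\xi=1$, whereas the $H^2$ part transforms into the weight $4x/|s+1|^2$, which is not comparable to $x$ near infinity.

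The genuine gaps are in the converse. First, the 2-isometry step: since $(e^{-ts}F)'=e^{-ts}\bigl(F'(s)-tF(s)\bigr)$, the Dirichlet integrand is $e^{-2tx}|F'(s)-tF(s)|^2$ times the weight, and nothing ``pulls out of the spatial integral''; worse, the norm of Definition~\ref{def_Dirichlet_halfplane} contributes the term $e^{-2t}|F(1)|^2$, which is strictly convex in $t$, so $t\mapsto\|M_{\phi_t}F\|^2$ is \emph{not} affine in this norm and your criterion (ii) cannot apply directly. The operators are only \emph{similar} to 2-isometries, which is why the paper transports back to $D(\tilde\mu)$ with its genuine $H^2$-based norm, identifies the cogenerator as $M_z$ there, and invokes the implication (iv)$\Rightarrow$(i) of Proposition~\ref{prop:2 isometry}. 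Second, the analyticity argument is a non sequitur: $|\phi_t(s)|^n\to0$ on compacta says nothing about $\bigcap_n\phi_t^n\,\widetilde{\D}_{\CC_+}(\tilde\nu)$, because a function $G$ in that intersection is one for which $Ge^{nts}$ lies in the space for every $n$, and these quotients are in no way norm-controlled; decay of the powers of the symbol does not by itself force the intersection of ranges to be trivial. A real argument is needed here, and the paper supplies it: if $Fe^{nt_0s}$ lies in the space for all $n$, then on the disc $f(z)\exp(nt_0(1+z)/(1-z))\in D(\delta_1)\subset H^2$, so $f$ is divisible in $H^2$ by every power of the singular inner function $\exp(-t_0(1+z)/(1-z))$, forcing $f\equiv0$. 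Your reproducing-kernel treatment of \eqref{equ1} is reasonable (the paper leaves this implicit), but the strong-continuity claim also needs local boundedness of $\|M_{\phi_t}\|$ near $t=0$, which is not part of the hypotheses and should be justified.
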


Before proceeding further, let us remark that our main result yields similarity for the semigroup $\{T(t)\}_{t\geq 0}$ because of the definition of the norm  in $\widetilde{\D}_{\CC_+}(\nu)$. In addition, as we shall see later, condition (\ref{equ1}) is a way of expressing the property that  $\dim \ker V^*=1$, where $V$ is the  cogenerator of the semigroup $\{T(t)\}_{t\geq 0}$.

In order to prove Theorem \ref{thm:main}, we need the following auxiliary results.

\begin{proposition}\label{prop3:analytic}
Let $\{T(t)\}_{t\geq 0}$ be a $C_0$-semigroup on a separable, infinite dimensional complex Hilbert space $\HH$ consisting of analytic 2-isometries. Then the cogenerator $V$ is an analytic 2-isometry.
\end{proposition}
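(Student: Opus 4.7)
The plan is to run a Wold-type argument on $V$: show that $\HH_u:=\bigcap_{n\geq 1}V^n\HH$, the candidate ``unitary part'' of $V$, is trivial. By Proposition \ref{prop:2 isometry}, $V$ exists and is a 2-isometry, hence an expansion ($\|Vx\|\geq\|x\|$), hence bounded below, so each $V^n\HH$ is closed and $\HH_u$ is a closed subspace of $\HH$.

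Because $V-I=2(A-I)^{-1}$, the cogenerator commutes with every $T(s)$ (the resolvent of the generator commutes with the semigroup). Hence each $V^n\HH$, and therefore $\HH_u$, is $T(s)$-invariant for every $s\geq 0$. A short argument using injectivity of $V$ shows that $V$ restricts to a surjection $V|_{\HH_u}\colon \HH_u\to\HH_u$: given $x\in\HH_u\subset V^{n+1}\HH$, the unique $y\in\HH$ with $Vy=x$ must lie in every $V^n\HH$, hence in $\HH_u$. So $V|_{\HH_u}$ is an invertible 2-isometry, and one checks that any such operator is automatically unitary: the identity $\|V^nx\|^2=n\|Vx\|^2-(n-1)\|x\|^2$ from the proof of Lemma \ref{lemma:2-isometric generator} extends to negative $n$ when $V$ is invertible, and non-negativity of the resulting linear function of $n\in\ZZ$ forces $\|Vx\|=\|x\|$.

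Next I would transfer this unitarity back to the semigroup. Since $(A-I)^{-1}=(V-I)/2$ preserves $\HH_u$, the cogenerator of the restricted semigroup $\tilde T(s):=T(s)|_{\HH_u}$ equals $V|_{\HH_u}$, which is unitary. Equivalently, the restricted generator $\tilde A$ has both $\pm 1\in\rho(\tilde A)$ and is skew-symmetric ($\re\langle \tilde Ay,y\rangle=0$ follows from $\|\tilde V x\|=\|x\|$ via $x=(\tilde A-I)y$), hence skew-adjoint by Lumer--Phillips applied to $\pm\tilde A$. By Stone's theorem, $\tilde T(s)$ extends to a $C_0$-group of unitaries on $\HH_u$; in particular $T(s)\HH_u=\HH_u$ for every $s\geq 0$.

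To conclude, the analyticity of each $T(s)$ gives $\bigcap_n T(s)^n\HH=\{0\}$, and restricting to the invariant subspace $\HH_u$ yields $\bigcap_n T(s)^n\HH_u\subset\{0\}$; but by the previous step $T(s)^n\HH_u=\HH_u$ for all $n$, forcing $\HH_u=\{0\}$, which is precisely the analyticity of $V$. The main obstacle is the middle step: verifying cleanly that the cogenerator of the restricted semigroup really is $V|_{\HH_u}$ and that unitarity of this cogenerator promotes a 2-isometric semigroup to a unitary group on $\HH_u$, rather than merely to an isometric semigroup (which would leave room for a non-trivial $\HH_u$ of shift type).
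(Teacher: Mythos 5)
Your proof is correct and follows essentially the same route as the paper's: both isolate the subspace $\bigcap_{n} V^{n}\HH$, show that the restricted semigroup is a unitary group because its cogenerator is unitary there, and then use analyticity of the operators $T(t)$ to force this subspace to be trivial. The only real difference is that the paper invokes Olofsson's Wold decomposition for 2-isometries to identify the unitary part of $V$, whereas you re-derive directly that an invertible 2-isometry is unitary; this is a presentational rather than structural difference.
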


\begin{proof}
First, we observe that $V$ is well-defined by Lemma \ref{lemma:2-isometric generator} and, it is a 2-isometry by Proposition \ref{prop:2 isometry}.
So, we are required to show that $V$ is analytic.

The Wold Decomposition Theorem for 2-isometries (see \cite{Olof}, for instance), yields that $V$ can be decomposed as $V=S\oplus U$ with respect to $\HH= \HH_1\oplus \HH_2$, where $U$ is the unitary part on $\HH_2=\bigcap_{n} V^n \HH$ and $S$ is an analytic 2-isometry. We will show that $U=0$.

Let us assume, on the contrary, that $U\neq 0$.

First, we observe that since 1 is not an eigenvalue of $V$, the generator $A$ of the semigroup $\{T(t)\}_{t\geq 0}$ may be expressed as the (possibly) unbounded operator $$(V+I)(V-I)^{-1}.$$  Moreover, since $T(t)$ commutes with $(A-I)^{-1}$ and hence with $V$, it holds that $\HH_2$ is invariant under  $T(t)$ for every $t\geq 0$. In addition, the generator $B$ of the restricted semigroup $\{T(t)_{|_{\HH_2}}\}_{t\geq 0}$ is the restriction  of $A$ to the $D(A)\cap \HH_2$ (see \cite[Ch.\ 2, Sec.\ 2]{EnNa}, for instance); and the cogenerator is $U$.

Now, taking into account the fact that $U$ is unitary, one deduces that $B$ is skew-adjoint (i. e., $B^\star=-B$). Then the restriction of $T(t)$ to $\HH_2$ is unitary for every $t\geq 0$ and, therefore,  every vector in $\HH^2$ is in the range of (powers of) $T(t)$. Since $T(t)$ is analytic, it follows that $\HH^2=\{0\}$,
a contradiction.
Hence, $U=0$ and the proof  is completed.
\end{proof}

\begin{lema}\label{lemma:eigenvectors}
Let $\{T(t)\}_{t\geq 0}$ be a $C_0$-semigroup on a separable, infinite dimensional complex Hilbert space $\HH$ and $A$ its generator. The following conditions are equivalent:
\begin{enumerate}
\item[(1)] $Ax_0=-x_0$  for some $x_0\in D(A)$.
\item[(2)] $T(t) x_0= e^{-t} x_0$ for all $t\geq 0$ and $x_0\in D(A)$.
\end{enumerate}
In addition, if $1\in \rho(A)$ and $V$ is the cogenerator, any of the previous conditions is equivalent to
\begin{enumerate}
\item[(3)] $Vx_0=0$  for some $x_0\in D(A)$.
\end{enumerate}
\end{lema}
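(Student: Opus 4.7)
The proof should be entirely elementary, relying only on standard facts about $C_0$-semigroups and the algebraic definition $V=(A+I)(A-I)^{-1}$, which makes sense by Lemma~\ref{lemma:2-isometric generator} once we are in the context of the final part. I would organize it as three short implications in a cycle, and in fact show that the \emph{same} $x_0$ works throughout, so that the equivalences can also be read as $\ker(A+I)=\ker V\cap D(A)$ (with the obvious correspondence of vectors).

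For $(1)\Rightarrow(2)$, given $Ax_{0}=-x_{0}$ with $x_{0}\in D(A)$, set $u(t):=T(t)x_{0}$. Since $x_{0}\in D(A)$, the orbit is differentiable and
\[
u'(t)=T(t)Ax_{0}=-T(t)x_{0}=-u(t),\qquad u(0)=x_{0}.
\]
By the uniqueness of solutions of this linear ODE in $\HH$, $u(t)=e^{-t}x_{0}$. For $(2)\Rightarrow(1)$, the identity $T(t)x_{0}=e^{-t}x_{0}$ makes
\[
\lim_{t\to 0^{+}}\frac{T(t)x_{0}-x_{0}}{t}=\lim_{t\to 0^{+}}\frac{e^{-t}-1}{t}\,x_{0}=-x_{0}
\]
exist in norm, so by the very definition of $A$, $x_{0}\in D(A)$ and $Ax_{0}=-x_{0}$.

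For $(1)\Rightarrow(3)$, assuming $Ax_{0}=-x_{0}$ one has $(A-I)x_{0}=-2x_{0}$, hence $(A-I)^{-1}x_{0}=-\tfrac12 x_{0}$, and therefore
\[
Vx_{0}=(A+I)(A-I)^{-1}x_{0}=-\tfrac12(A+I)x_{0}=-\tfrac12(Ax_{0}+x_{0})=0.
\]
For $(3)\Rightarrow(1)$, set $y_{0}:=(A-I)^{-1}x_{0}$, which lies in $D(A)$ because $(A-I)^{-1}:\HH\to D(A)$ is everywhere defined (here we use $1\in\rho(A)$). Then $(A+I)y_{0}=Vx_{0}=0$, so $Ay_{0}=-y_{0}$; now $x_{0}=(A-I)y_{0}=-2y_{0}\in D(A)$, and
\[
Ax_{0}=-2Ay_{0}=2y_{0}=-x_{0}.
\]

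There is no real obstacle: the content is the standard correspondence between the point spectrum of the generator, of the semigroup, and of the cogenerator, specialized to the eigenvalue $-1$ of $A$ (equivalently $0$ of $V$, equivalently $e^{-t}$ of $T(t)$). The only small point to check is that in $(3)\Rightarrow(1)$ the vector produced is automatically in $D(A)$, which follows from the range of $(A-I)^{-1}$ being exactly $D(A)$.
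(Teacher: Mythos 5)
Your proof is correct and follows essentially the same route as the paper, which simply invokes the standard point-spectrum correspondence $\Ker(\lambda I-A)=\bigcap_{t\geq 0}\Ker(e^{\lambda t}-T(t))$ from Engel--Nagel for the equivalence of (1) and (2) and notes that (3) follows from the definition of $V$. You have merely filled in the details the paper delegates to the reference (the ODE argument for the orbit, and the explicit algebra with $(A-I)^{-1}$), and your observation that $\ker V\subseteq D(A)$ automatically is a worthwhile small point that the paper leaves implicit.
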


Note that the equivalence between (1) and (2) in Lemma \ref{lemma:eigenvectors} just follows from the relationship between the eigenspaces of $A$ and the semigroup $\{T(t)\}_{t\geq 0}$, that is, $$\Ker (\lambda I-A)=\displaystyle \bigcap_{t\geq 0} \Ker (e^{\lambda t}-T(t)),$$ with $\lambda \in \mathbb{C}$ (see \cite[Corollary 3.8, Section IV]{EnNa}, for instance). The last statement follows from the definition of $V$.

We are now in position to prove Theorem \ref{thm:main}.\\

\begin{demode}\emph{Theorem \ref{thm:main}}
Assume that $\{T(t)\}_{t>0}$ consists of analytic 2-isometries. Let $V$ denote its cogenerator; this is well-defined by Lemma \ref{lemma:2-isometric generator}, and it is an analytic 2-isometry by Proposition \ref{prop3:analytic}.

In addition, the hypotheses $\dim \bigcap_{t>0} \ker \left (T^*(t)-e^{-t}\,I\right ) =1$ along with Lemma \ref{lemma:eigenvectors} applied to the adjoint
semigroup $\{T^*(t)\}_{t\geq 0}$, yields that $\dim \Ker V^*=1$.

By means of Richter's Theorem, it follows that $V$ is similar to $M_z$ acting on the space $D(\mu)$ for some finite non-negative Borel measure $\mu$ on $\TT$ considered with the equivalent norm
\begin{eqnarray}
\|f\|^2_{D(\mu)}&\approx&\displaystyle  |f(0)|^2+ \int_{\mathbb{D}} |f'(z)|^2 \left ( \int_{|\xi|=1} \frac{1-|z|^2}{|\xi-z|^2}\, d\mu(\xi)\;\right )  \frac{dm(z)}{\pi} \nonumber\\
&=&\displaystyle   |f(0)|^2 + \int_{\mathbb{D}} |f'(z)|^2 P_{\mu}(z)  \frac{dm(z)}{\pi}. \label{eq1}
\end{eqnarray}
Observe that the similarity is the price paid when we consider the equivalent norm. Hence, for any $t\geq 0$, it follows that $T(t)$ is unitarily equivalent to the multiplication operator induced by $\exp(-t(1+z)/(1-z))$ on $D(\mu)$.
Now, we migrate to the right half-plane $\CC_+=\{\Re s>0\}$ applying the change of variables $s=(1+z)/(1-z)$, or $z=(s-1)/(s+1)$.

% If we write $f(z)=F(s)$, then $F'(s)=2f'(z)/(s+1)^2$.

 First, we observe that
$$P_{\mu}\left (\displaystyle \frac{s-1}{s+1}\right )=\displaystyle \int_{|\xi|=1} \displaystyle \frac{1-|\frac{s-1}{s+1}|^2}{|\xi-\frac{s-1}{s+1}|^2}\, d\mu(\xi)\qquad (s\in \CC_+) $$
is a positive harmonic function in $\CC_+$;
so there exists a non-negative constant $\rho$ and a finite positive Borel measure $\nu$ supported on the imaginary axis such that
\begin{equation}\label{eq:eq}
P_{\mu}\left (\displaystyle \frac{s-1}{s+1}\right )= \rho\, x+ \frac{1}{\pi} \int_{-\infty}^{\infty} \frac{x}{x^2+(y-\tau)^2}\, d\nu(\tau),\qquad (s=x+iy)
\end{equation}
(see \cite[Exercise 6, p. 134]{Hoffman}, for instance).

We can express $\nu$ in terms of $\mu$, since with $\xi=(u-1)/(u+1)$ for $u=i\tau \in i\RR$, we have
\begin{eqnarray*}
P_{\mu}\left (\displaystyle \frac{s-1}{s+1}\right )&=& \mu(1)\frac{|s+1|^2-|s-1|^2}{|(s+1)-(s-1)|^2} + \int_{\xi \in \TT \setminus \{1\}}
\frac{(|s+1|^2-|s-1|^2) |u+1|^2} {|(u-1)(s+1)-(u+1)(s-1)|^2}  \, d\mu(\xi)   \\
&=&\mu(1)x + \int_{\xi \in \TT \setminus \{1\}}
\frac{x |u+1|^2} {|u-s|^2}  \, d\mu(\xi),   \\
&=&
 \mu(1)x + \int_{\xi \in \TT \setminus \{1\}}
\frac{x (1+\tau^2)} {x^2+(y-\tau)^2}  \, d\mu(\xi),
\end{eqnarray*}
 where $s=x+iy \in \CC_+$.
So in \eqref{eq:eq} we have
\begin{equation}\label{eq:munu}
\rho=\mu(1) \qquad \hbox{and} \qquad \dfrac{d\nu(\tau)}{\pi(1+\tau^2)}=  d\mu(\xi) .
\end{equation}

Then, upon applying the change of variables $s=(1+z)/(1-z)$ in (\ref{eq1}), we deduce that $T(t)$ is similar to the multiplication operator induced by $\exp(-ts)$ acting on the space $\widetilde{\D}_{\CC_+}(\nu)$ consisting  of analytic functions $F$ on $\CC_+$ such that
\begin{equation}\label{eq2}
\frac{1}{\pi} \int_{\CC_+} |F'(s)|^2 \left (x+ \frac{1}{\pi} \int_{-\infty}^{\infty} \frac{x}{x^2+(y-\tau)^2}\, d\nu(\tau) \right) \, dx\, dy<\infty,
\end{equation}
where $s=x+iy$ and $F(s)=f(z)$. This proves the first half of Theorem \ref{thm:main}.

%----------------------------------------------------------------

In order to conclude the proof, let us assume that the multiplication operators induced by $\exp(-ts)$ act continuously for every $t>0$ on a Dirichlet space $\widetilde{\D}_{\CC_+}(\tilde{\nu})$ where $\tilde{\nu}$ is a finite positive Borel measure supported on the imaginary axis.
Reversing the steps above and taking  into account the fact that (\ref{eq:munu})  defines a measure  $\tilde{\mu}$ on $\TT$, where $\tilde{\mu}(1)=\tilde{\nu}(0)=\rho$, we deduce that  the given semigroup is similar to 
the semigroup of multiplication operators  induced by $\phi_t(z)=\exp(-t(1+z)/(1-z))$ on $D(\tilde{\mu})$.
Since the cogenerator of such a semigroup is $M_z$, which is a 2-isometry, it follows by Proposition \ref{prop:2 isometry} that $\{M_{\phi_t}\}_{t\geq 0}$ consists of  2-isometries.

It remains to show that $M_{\phi_t}$ is analytic for every $t > 0$. If not, then there are a $t_0>0$ and a
$F \in \widetilde{\D}_{\CC_+}(\tilde{\nu})$ such that the function $s \mapsto e^{nt_0s}F(s)$ lies in
$\widetilde{\D}_{\CC_+}(\tilde{\nu})$ for $n=1,2,3,\ldots$.

In particular,
\[
\int_{\CC_+} |(F(s)e^{nt_0s})'|^2 x \, dx \, dy < \infty.
\]
Transferring to the disc by letting $s=(1+z)/(1-z)$ and $F(s)=f(z)$, we have
\[
\int_\DD \left| [f(z)\exp(nt_0(1+z)/(1-z))]' \right|^2 \frac{1-|z|^2}{|1-z|^2} \, dA(z) < \infty,
\]
so that the function $z \mapsto f(z)\exp(nt_0(1+z)/(1-z))$ lies in the weighted Dirichlet space
$D(\delta_1)$ corresponding to a Dirac measure at $1$, and hence in $H^2(\DD)$,
by  \cite[Thm. 7.1.2]{FKMR}.
We conclude that $f$ is identically zero, since no nontrivial $H^2$ function can be divisible
by an arbitrarily large power of a nonconstant inner function.  Hence the analyticity is also established.

\end{demode}

\subsection*{A connection with the right-shift semigroup in weighted $L^2(\mathbb{R}_+)$}
Now, by means  of the Laplace transform, we will establish a connection of $C_0$-semigroups
of analytic 2-isometries  $\{T(t)\}_{t\geq0}$ acting on a Hilbert space $\HH$ and the  the right shift semigroup $\{S_t\}_{t\geq 0}$
$$
S_tf(x)=\left \{ \begin{array}{ll} 0 & \mbox { if  }0\leq x\leq t, \\
f(x-t)& \mbox { if  } x>t, \end{array} \right .
$$
acting on a weighted Lebesgue space on the half line $\mathbb{R}_+$.

First, let us begin by recalling a result asserting that for each $\alpha>-1$, a function $G$ analytic
in $\CC_+$ belongs to the {\it weighted Bergman space}  $\A^2_\alpha(\CC_+)$, that is, the space consisting of analytic functions on $\CC_+$ for which
$$
\| G \|_{\A^2_\alpha(\Pi^+)}^2  = \int_{\CC_+} |G(x+iy) |^2  \, x^{\alpha}\, dx\, dy< \infty\,,
$$
if and only if it has the form
$$
G(s):= \LL g(s)=\int_0^\infty e^{-st}\, g(t)\, dt\,, \qquad  s\in\CC_+\,,
$$
where $g$ is a measurable function on
$\RR^+$ with
$$
\int_0^\infty |g(t)|^2 \, t^{-1-\alpha}\, dt < \infty\,.
$$
Moreover,
$$
\|G\|^2_{\A^2_\alpha(\CC^+)} = \frac{\pi\,\Gamma(1+\alpha)}{2^\alpha}
\int_0^\infty |g(t)|^2 \, t^{-1-\alpha}\, dt\,,
$$
(see \cite{DP94} or \cite[Theorem 1]{DGM05}, for instance). In other words, the Laplace transform is an isometric isomorphism between $\A^2_\alpha(\CC_+)$
and $L^2(\RR_+, \left ( \frac{\pi\,\Gamma(1+\alpha)}{2^\alpha} \right )^{1/2} t^{-1-\alpha}\, dt)$. Hence, by means of a density argument, and taking $\alpha=1$, it follows that for any $F\in \widetilde{\D}_{\CC_+}(\nu)$, there exists $f\in L^2(\RR_+)$ (unique in the usual sense of equivalence classes), such that
\begin{enumerate}
\item[$(i)$] $\LL (tf(t))=F'(s)$,
\item [$(ii)$]
$$
\frac{1}{\pi} \int_{\CC_+} |F'(s)|^2 \, x\,  dx\, dy= \frac{1}{2} \, \int_0^{\infty} |f(t)|^2 dt,\qquad (s=x+iy),
$$
which corresponds to the first sum in (\ref{eq2}); and
\item[$(iii)$]
$$
\frac{1}{\pi^2} \int_{\CC_+} |F'(s)|^2   \frac{x}{x^2+(y-\tau)^2}\, dx\, dy= \frac{1}{2\pi} \, \int_0^{\infty} \left |\int_0^t  u\,f(u) e^{-i\tau u} \, du \right |^2  \frac{dt}{t^2},\quad (s=x+iy).
$$
\end{enumerate}
These three items along with the fact that for any $t\geq 0$,  $T(t)$ is similar to the multiplication operator induced by $\exp(-ts)$ acting on the space $\widetilde{\D}_{\CC_+}(\nu)$  yields,
by means of the Laplace transform, that $\{T(t)\}_{t\geq 0}$ is transformed to the right-shift semigroup $\{S_{t}\}_{t\geq 0}$ acting on the Hilbert space $\mathfrak{H}$ which consists of functions $f$ defined on $\mathbb{R}_+$ such that
$$
\int_0^{\infty} |f(t)|^2 \, dt+ \int_0^{\infty} \int_{-\infty}^{\infty} \left | \int_0^t f(u)e^{-i\tau u} u\,du\right |^2 \, d\nu (\tau)\;  \frac{dt}{t^2}< \infty.
$$
\\

%Finally, we observe that the converse implication follows reversing the steps since both transformations, the %Laplace transform  and the change of variable, are isomorphisms between the spaces considered.  Therefore, %Theorem \ref{thm:main} is proved.

\section{A final remark on invariant subspaces of $C_0$-semigroups of analytic 2-isometries}\label{Sec 3}

As an application of our main result, we deal with the study of the lattice of the closed invariant subspaces of a $C_0$-semigroup  $\{T(t)\}_{t\geq0}$ of analytic 2-isometries.

Here we shall use the following result from \cite[Thm.~7.1]{Ri2} and \cite[Thm.~3.2]{RiSu}.

\begin{theorem}\label{thm:richter12}
Let $\M$ be a non-zero invariant subspace of $(M_z,D(\mu))$. Then $\M=\phi D_{\mu_\phi}$ where $\phi \in \M \ominus z\M$ is a multiplier
of $D(\mu)$ and $d\mu_\phi=|\phi|^2 d \mu$.
\end{theorem}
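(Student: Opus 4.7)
The plan is to exploit two features of $(M_z, D(\mu))$: it is an analytic 2-isometry, and the $D(\mu)$-norm has an explicit integral expression. The argument splits into three moves, the middle one being the main technical hurdle.

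\emph{Step 1 (Wold-type reduction).} Since $(M_z, D(\mu))$ is an analytic 2-isometry by Richter's Theorem quoted in the introduction, the wandering-subspace theorem for analytic 2-isometries (Shimorin/Richter) yields
\[
\M = \bigvee_{n \geq 0} z^n \mathcal{W}, \qquad \mathcal{W} := \M \ominus z\M.
\]
So it suffices to describe $\mathcal{W}$ together with the polynomial action on its elements.

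\emph{Step 2 (one-dimensionality of $\mathcal{W}$).} Iterating the 2-isometric identity and using the Dirichlet-type formula $\|zf\|_{D(\mu)}^2 - \|f\|_{D(\mu)}^2 = \|f\|_{H^2}^2$, one obtains, for every $\phi \in \mathcal{W}$,
\[
\|z^n \phi\|_{D(\mu)}^2 = \|\phi\|_{D(\mu)}^2 + n \|\phi\|_{H^2}^2 .
\]
Now assume for contradiction that $\phi,\psi \in \mathcal{W}$ are linearly independent unit vectors. Form $\tilde\psi := \psi(0)\phi - \phi(0)\psi \in \mathcal{W}$; this is nonzero and vanishes at $0$. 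The boundary/integral rigidity of $D(\mu)$ then forces $\tilde\psi \in z\M$, contradicting $\tilde\psi \perp z\M$. Hence $\dim \mathcal{W} = 1$.

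\emph{Step 3 (identification of $\M$).} Let $\phi$ be a norm-one vector spanning $\mathcal{W}$. For a polynomial $p$, expanding $(\phi p)' = \phi'p + \phi p'$ and using the orthogonality $\phi \perp z p\phi$ (which holds since $z p\phi \in z\M$), a direct computation in the integral expression for $\|\cdot\|_{D(\mu)}^2$ yields the key change-of-measure identity
\[
\|\phi p\|_{D(\mu)}^2 = \|p\|_{D(\mu_\phi)}^2, \qquad d\mu_\phi = |\phi|^2 \, d\mu .
\]
In particular $\phi$ is a bounded multiplier of $D(\mu)$, and $f \mapsto \phi f$ extends by density of polynomials in $D(\mu_\phi)$ to an isometric embedding $M_\phi : D(\mu_\phi) \hookrightarrow D(\mu)$. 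Its range is $M_z$-invariant and contains $\{z^n\phi : n\geq 0\}$, so by Step~1 it equals $\M$; the reverse inclusion is immediate. Hence $\M = \phi D(\mu_\phi)$.

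The main obstacle is Step~2: abstract 2-isometry theory only furnishes an upper bound on $\dim \mathcal{W}$, and cutting it down to one genuinely requires the Richter--Sundberg extremal/boundary analysis specific to $D(\mu)$, exploiting the Poisson-integral form of the weight attached to $\mu$ and the sharp boundary behaviour of $D(\mu)$-functions.
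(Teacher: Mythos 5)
First, a point of reference: the paper does not prove this statement at all --- it is imported verbatim from \cite[Thm.~7.1]{Ri2} and \cite[Thm.~3.2]{RiSu} --- so your proposal has to stand on its own as a proof of a known, genuinely nontrivial theorem. Your outline does trace the correct strategy (wandering subspace property, codimension one, change of measure), but it contains one wrong identity and one unfilled gap exactly where the theorem is hard. The identity $\|zf\|_{D(\mu)}^2-\|f\|_{D(\mu)}^2=\|f\|_{H^2}^2$ holds only when $\mu$ is normalized Lebesgue measure; for general $\mu$ the 2-isometry identity on $D(\mu)$ reads $\|zf\|_{D(\mu)}^2=\|f\|_{D(\mu)}^2+\int_{\TT}|f|^2\,d\mu$, so that $\|z^n\phi\|^2=\|\phi\|^2+n\int_{\TT}|\phi|^2\,d\mu$. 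Note that even making sense of $\int_{\TT}|\phi|^2\,d\mu$ --- the existence of boundary values $\mu$-a.e.\ for $D(\mu)$-functions --- is a nontrivial ingredient of \cite{RiSu} that you need before $d\mu_\phi=|\phi|^2\,d\mu$ is even defined.

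The real gap is Step~2. The claim that $\tilde\psi\in\M$ with $\tilde\psi(0)=0$ must lie in $z\M$ is precisely the codimension-one theorem; invoking ``boundary/integral rigidity'' is not an argument, and your closing remark concedes that the step is outsourced to the Richter--Sundberg extremal and local-Dirichlet-integral analysis. Nothing in the sketch substitutes for it, so the proof is circular at its core. Step~3 is closer to salvageable but is not the advertised ``direct computation'': the vectors $z^m\phi$ are not pairwise orthogonal (polarizing the corrected identity gives $\langle z^m\phi,z^n\phi\rangle=m\int_{\TT}\bar\zeta^{\,n-m}|\phi|^2\,d\mu(\zeta)$ for $m<n$), and matching these cross terms with $\langle z^m,z^n\rangle_{D(\mu_\phi)}$ requires the formula $D_\nu(z^m,z^n)=\min(m,n)\,\widehat\nu(n-m)$ together with $\|\phi\|_{D(\mu)}=1$ for the diagonal. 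Finally, the assertion that $\phi$ multiplies $D(\mu)$ (rather than merely $D(\mu_\phi)$) into $D(\mu)$ needs the extremal estimate $|\phi|\le 1$, another Richter--Sundberg ingredient you do not supply. In short: an accurate road map of the literature, but not a proof.
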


 In the continuous case we have the following result:

\begin{theorem}
Let $\{T(t)\}_{t \ge 0}$ denote the semigroup of multiplication operators induced by $\exp(-ts)$ on the space
$\widetilde{\D}_{\CC_+}(\nu)$, as in Theorem~\ref{thm:main}, and let $\M$ be a non-zero closed subspace invariant under all
the operators $T(t)$. Then there is a function $\psi \in \M$ such that $\M=\psi \widetilde{\D}_{\CC_+}(\nu_\psi)$.
\end{theorem}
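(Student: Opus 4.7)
The plan is to reduce this statement to the discrete Richter--Sundberg theorem (Theorem~\ref{thm:richter12}) by undoing the similarity supplied by Theorem~\ref{thm:main}, and then to transfer the resulting factorisation back to the right half-plane. Concretely, Theorem~\ref{thm:main} identifies $\{T(t)\}_{t\ge 0}$ on $\widetilde{\D}_{\CC_+}(\nu)$ with the semigroup $\{M_{\phi_t}\}_{t\ge 0}$, $\phi_t(z)=\exp(-t(1+z)/(1-z))$, acting on $D(\mu)$, where $\nu$ and $\mu$ are related by \eqref{eq:munu}. This similarity induces a lattice isomorphism between the closed $T(t)$-invariant subspaces of $\widetilde{\D}_{\CC_+}(\nu)$ and the closed $M_{\phi_t}$-invariant subspaces of $D(\mu)$, so I would first move the problem to the disc.

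The next step is the standard observation that the closed subspaces $\M_0 \subset D(\mu)$ which are invariant under every $M_{\phi_t}$ coincide with those invariant under the cogenerator $M_z$. The inclusion $M_z\M_0 \subset \M_0$ follows from the Bochner-integral representation $(A-I)^{-1}x = -\int_0^\infty e^{-t} M_{\phi_t}x\, dt$ together with $V = I + 2(A-I)^{-1}$, while the converse is immediate because each $M_{\phi_t}$ lies in the strongly closed unital algebra generated by $M_z$. Once in this setting I would apply Theorem~\ref{thm:richter12} to obtain $\phi \in \M_0 \ominus z\M_0$ with $\M_0 = \phi\, D(\mu_\phi)$ and $d\mu_\phi = |\phi|^2\, d\mu$. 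Transferring back by $s=(1+z)/(1-z)$, I set $\psi(s) = \phi((s-1)/(s+1))$, which lies in $\M$ via the similarity; applying the correspondence \eqref{eq:munu} with $\mu_\phi$ in place of $\mu$ produces the companion measure $\nu_\psi$ on the imaginary axis, which on the non-atomic part satisfies $d\nu_\psi(\tau) = |\psi(i\tau)|^2 d\nu(\tau)$, and the factorisation $\M_0 = \phi\, D(\mu_\phi)$ translates to $\M = \psi\, \widetilde{\D}_{\CC_+}(\nu_\psi)$.

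The main obstacle is not conceptual but bookkeeping: one has to verify that the Cayley transform genuinely sends $\phi D(\mu_\phi)$ to $\psi\,\widetilde{\D}_{\CC_+}(\nu_\psi)$ with the weight coming out of \eqref{eq:munu} when $\mu$ is replaced by $|\phi|^2 \mu$, including the correct treatment of any atom at $\xi=1$ (which corresponds, via \eqref{eq:munu}, to the multiple of Lebesgue measure $\rho\,x$ in the definition of $\widetilde{\D}_{\CC_+}$). The boundary values of $\phi$, which exist $\mu$-a.e.\ because $\phi$ is a multiplier of $D(\mu)$, are transported under the Cayley map to boundary values of $\psi$ on $i\RR$, so that $|\phi|^2$ on $\TT$ becomes $|\psi|^2$ on the imaginary axis. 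Once these identifications are in place, the multiplier factorisation for $\M$ in the half-plane follows at once from its disc counterpart.
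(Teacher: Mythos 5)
Your proposal follows essentially the same route as the paper: pass to the cogenerator, transfer to the disc via the Cayley map, apply the Richter--Sundberg factorisation $\M_0=\phi D(\mu_\phi)$, and pull back using \eqref{eq:munu} to get $\psi$ and $\nu_\psi$. The only difference is that you spell out (correctly, via the resolvent integral $(A-I)^{-1}x=-\int_0^\infty e^{-t}T(t)x\,dt$) why semigroup-invariance implies invariance under the cogenerator, a step the paper asserts without detail.
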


\begin{proof}
If $\M$ is invariant under the semigroup, then it is also invariant under the
cogenerator $V$, and after transforming to the disc
as in the proof of Theorem \ref{thm:main},
we may apply Theorem \ref{thm:richter12}.

Note that under the equivalence between $D(\mu)$ and $\widetilde{\D}_{\CC_+}(\nu)$, as detailed in \eqref{eq:eq} and \eqref{eq:munu},
the subspace $\phi D_{\mu_\phi}$ maps to a space $ \psi \widetilde{\D}_{\CC_+}(\nu_\psi)$,
where $\psi(s)=\phi((s-1)/(s+1))$ and $d\nu_\psi=|\psi|^2 d\nu$.
\end{proof}

\section*{Acknowledgements}

The authors thank the referees for a careful reading of the manuscript as well as for their suggestions, which improved the final version of the submitted manuscript.

\end{document}